\titleformat*{\section}{\normalsize\bfseries\centering}
\titleformat*{\subsection}{\normalsize\itshape}
\newtheoremstyle{fact}
     {\topsep}
     {\topsep}
     {\slshape}
     {}
     {\bfseries}
     {}
     { }
     {\thmname{#1}\thmnumber{ #2.}\thmnote{ \rm (#3)}}
\newtheoremstyle{mylabel} 
        {\topsep}
        {\topsep}
        {\itshape}
        {}
        {\bfseries}
        {}
        { }
        {\thmname{#1}\thmnote{ #3}.} 
\newtheorem{theorem}{Theorem}[section]
\newtheorem{Ltheorem}{Theorem}
\newtheorem*{theorem*}{Theorem} 
\newtheorem{lemma}[theorem]{Lemma}
\newtheorem{corollary}[theorem]{Corollary}
\theoremstyle{definition}
\newtheorem*{remark*}{Remark}
\newtheorem*{question*}{Question}
\newtheorem*{examples*}{Examples}  
\newtheorem{example}[theorem]{Example}
\newtheorem*{example*}{Example}
\theoremstyle{mylabel}
\newtheorem*{Ltheorem*}{Theorem}
\theoremstyle{fact}
\newtheorem{ftheorem}[theorem]{Theorem}
\def\proofont{\fontseries{bx}\fontshape{sc}\selectfont}
\def\proofname{Proof. }
\newcommand{\colim}{\operatorname*{colim}}
\renewenvironment{proof}[1][\proofname]{\par
  \normalfont
  \topsep6\p@\@plus6\p@ \trivlist
  \item[\hskip\labelsep\noindent\proofont #1]\ignorespaces
}{%
  \qed\endtrivlist
}
\def\@fnsymbol#1{\ifcase#1\or * \or 1 \or 2  \else\@ctrerr\fi\relax}
\let\mytitle\@title
\author{Rafael Dahmen and G\'abor Luk\'acs}
\title{Long colimits of topological groups II: Free groups and  vector spaces\thanks{2010 Mathematics Subject Classification: 
Primary 46A16 46M40; Secondary 46A99 54D50 54D55}}
\begin{document}

\makeatletter
\let\mytitle\@title
\chead{\small\itshape R. Dahmen and G. Luk\'acs / Long colimits of topological groups II: Free groups and  vector spaces}
\fancyhead[RO,LE]{\small \thepage}
\makeatother

\maketitle

\def\thanks#1{}

\thispagestyle{empty}

\begin{abstract}
Topological properties of the free topological group and the free abelian topological group on a space have been thoroughly studied since the 1940s. In this paper, we study
the free topological $\mathbb{R}$-vector space $V(X)$ on $X$. We show that $V(X)$ is
a quotient of the free abelian topological group on $[-1,1]\times X$, and use this to prove topological vector space
analogues of existing results for free topological groups on pseudocompact spaces. As
an application, we show that certain families of subspaces of $V(X)$ satisfy the so-called
{\itshape algebraic colimit property} defined in the authors' previous work.
\end{abstract}

\section{Introduction}

For a topological space $X$, the free topological $\mathbb{R}$-vector space $V(X)$ on $X$ is a 
topological vector space over $\mathbb{R}$ with  a continuous map $\iota_X\colon X \rightarrow V(X)$
that satisfies the universal property that for every continuous map $f\colon X \rightarrow W$ into a topological
$\mathbb{R}$-vector space  $W$, there is a unique continuous linear map  $\bar f \colon V(X) \rightarrow W$
such that $f=\bar f \circ \iota_X$:
\begin{align}
\bfig
\qtriangle/->`->`-->/[X`V(X)`W;\iota_X`f`\exists !\bar f]
\efig 
\end{align}
The free topological group $F(X)$ and the free abelian topological group $A(X)$ on $X$ are defined 
analogously, using similar universal properties. Since the 1940s, the topological properties of $F(X)$ and $A(X)$ have been thoroughly studied (see \cite{TkaTop2} for a survey up to the year 2000).

In this paper, we prove topological vector space analogues of existing results for free topological
groups and abelian topological groups. Our results are based on expressing subspaces of $V(X)$
as quotients of subgroups of $A([-1,1] \times X)$. 

For a topological space $Y$ and its subspace $X$, let $V(X,Y)$ denote the topological vector subspace
of $V(Y)$ spanned by the image of $X$, and similarly, let $A(X,Y)$ denote the topological subgroup of $A(Y)$ generated by the image of $X$.

\begin{Ltheorem} \label{thm:main:VXY}
Let $X$ be a dense subspace of a Tychonoff space $Y$. Then $V(X,Y)$ is canonically isomorphic as an abelian 
topological group to a quotient of $A([-1,1]\times X,[-1,1]\times Y)$. In particular, $V(X)$ is a~quotient
of $A([-1,1]\times X)$ and of $F([-1,1]\times X)$ as a topological group.
\end{Ltheorem}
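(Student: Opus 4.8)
The plan is to realize $V(X,Y)$ as a quotient via a single explicit continuous homomorphism and then verify it is a quotient map by playing the quotient topology against the universal property of $V$. First I would construct, using the universal property of the free abelian topological group, the homomorphism $\phi\colon A([-1,1]\times Y)\to V(Y)$ induced by the continuous map $[-1,1]\times Y\to V(Y)$, $(t,y)\mapsto t\cdot\iota_Y(y)$. Since scalar multiplication in $V(Y)$ is continuous, this map is continuous and $\phi$ is a well-defined continuous homomorphism. Surjectivity onto $V(Y)$ is elementary: every element has the form $\sum_i r_i\iota_Y(y_i)$, and writing each real $r_i$ as $k_i+s_i$ with $k_i\in\mathbb{Z}$ and $s_i\in[-1,1]$ exhibits $r_i\iota_Y(y_i)=\phi\bigl(k_i(1,y_i)+(s_i,y_i)\bigr)$. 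The same computation restricted to generators $(t,x)$ with $x\in X$ shows that $\phi$ maps the subgroup $A([-1,1]\times X,[-1,1]\times Y)$ onto the subspace $V(X,Y)$.

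The heart of the proof is to show that $\phi$ is a quotient map; equivalently, that the quotient topology $\tau_q$ it induces on $V(Y)$ equals the free topology $\tau_V$. Because $\phi$ is continuous for $\tau_V$ while $\tau_q$ is the finest topology making $\phi$ continuous, we automatically have $\tau_V\subseteq\tau_q$. For the reverse inclusion I would invoke the maximality built into $V(Y)$: it suffices to prove that $\tau_q$ is itself a vector-space topology for which $\iota_Y$ is continuous, since $\tau_V$ is the finest such. Continuity of $\iota_Y$ and of addition are immediate (indeed $\iota_Y(y)=\phi(1,y)$ is continuous because $y\mapsto(1,y)$ and $\phi$ are, and $\tau_q$ is a group topology as a quotient of an abelian topological group), so everything reduces to the continuity of scalar multiplication $\mathbb{R}\times(V(Y),\tau_q)\to(V(Y),\tau_q)$.

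The continuity of scalar multiplication is the main obstacle, and I would attack it in two stages. Since $[-1,1]$ is compact, $\mathrm{id}\times\phi$ is a quotient map, so continuity of the restricted action $[-1,1]\times V(Y)\to V(Y)$ reduces to continuity of $(s,a)\mapsto s\cdot\phi(a)=\phi\bigl(\Xi(s,a)\bigr)$, where $\Xi\colon[-1,1]\times A([-1,1]\times Y)\to A([-1,1]\times Y)$ sends $(s,\sum_i n_i(t_i,y_i))$ to $\sum_i n_i(st_i,y_i)$; here $st_i\in[-1,1]$, so no renormalization is needed. Thus the crux is a \emph{substitution lemma}: the map $\Xi$ induced by the continuous reparametrization $(s,(t,y))\mapsto(st,y)$ is jointly continuous. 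This is the genuinely technical point, since $A(-)$ does not commute with products; I expect to prove it from the explicit description of the topology of $A([-1,1]\times Y)$ (via Graev-type pseudometrics extending continuous pseudometrics on $[-1,1]\times Y$), exploiting that the reparametrization is uniformly controlled on the compact factor. Granting $\Xi$ continuous, the full action follows by the local trick $r\cdot v=N\cdot\bigl((r/N)\cdot v\bigr)$ for $|r|<N$, combining the bounded action with the continuous $N$-fold sum map; this proves $\tau_q=\tau_V$, so $\phi$ is a quotient map and $V(Y)\cong A([-1,1]\times Y)/\ker\phi$.

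Finally I would descend to the dense subspace. Restricting $\phi$ to the subgroup $A([-1,1]\times X,[-1,1]\times Y)$ gives a continuous surjection onto $V(X,Y)$, and I would show this restriction is again a quotient map onto $V(X,Y)$ with its subspace topology. This is where the hypotheses enter: the Tychonoff condition guarantees that $\iota_Y$ and $\iota_{[-1,1]\times Y}$ are embeddings and the free objects Hausdorff, while the density of $X$ in $Y$ is used to match the subspace topology on $V(X,Y)\subseteq V(Y)$ with the quotient topology coming from the dense subgroup $A([-1,1]\times X,[-1,1]\times Y)$. The ``in particular'' clause is the case $Y=X$: then $A([-1,1]\times X,[-1,1]\times X)=A([-1,1]\times X)$, giving $V(X)$ as a quotient of $A([-1,1]\times X)$, and composing with the classical abelianization quotient $F([-1,1]\times X)\twoheadrightarrow A([-1,1]\times X)$ exhibits $V(X)$ as a quotient of $F([-1,1]\times X)$ as well.
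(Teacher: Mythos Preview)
Your overall strategy --- form the quotient, show it carries a topological $\mathbb{R}$-vector space structure, then invoke the universal property of $V$ to produce the inverse --- is exactly the paper's. The differences lie in the execution of two steps.

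For the continuity of scalar multiplication, the paper bypasses your substitution lemma by an exponential-law manoeuvre. Rather than prove that $\Xi\colon[-1,1]\times A([-1,1]\times Y)\to A([-1,1]\times Y)$ is jointly continuous, they start from the obviously continuous maps $g_n(s,t,y)=2^n\widetilde\pi j(st/2^n,y)$ on $(-2^n,2^n)\times[-1,1]\times Y$, glue them to a continuous $g\colon\mathbb{R}\times[-1,1]\times Y\to\widetilde E$, curry out the locally compact factor $\mathbb{R}$ to land in the topological group $\mathscr{H}(\mathbb{R},\widetilde E)$, apply the universal property of $A$ \emph{there}, and only then uncurry. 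No description of the topology of $A([-1,1]\times Y)$ is needed. Your Graev-pseudometric route should also succeed, but is heavier; indeed the very same currying trick proves your $\Xi$ continuous without pseudometrics, so the two approaches are closer than they appear.

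The more serious issue is your descent from the absolute case to the relative one. Restricting a quotient map to a subgroup gives a quotient onto the image only when the subgroup is saturated, and $A([-1,1]\times X,[-1,1]\times Y)$ is \emph{not} the full $\phi$-preimage of $V(X,Y)$: for any $y\in Y\setminus X$ the generator $(0,y)$ maps to $0\in V(X,Y)$ yet lies outside the subgroup. Nor can you conclude by maximality, since $V(X,Y)$ is not a free object on $X$ --- the finest vector topology making $X\hookrightarrow V(X,Y)$ continuous is that of $V(X)$, which in general is strictly finer. The paper therefore proves the relative statement \emph{directly}: it forms $E=A([-1,1]\times X,[-1,1]\times Y)/N$, passes to its group completion $\widetilde E$, and uses density of $A([-1,1]\times X,[-1,1]\times Y)$ in $A([-1,1]\times Y)$ to extend the projection to $\widetilde\pi\colon A([-1,1]\times Y)\to\widetilde E$. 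This makes the universal property of $V(Y)$ available, producing $\widetilde\varphi\colon V(Y)\to\widetilde E$ whose restriction to $V(X,Y)$ furnishes the inverse of $\bar p$. The density hypothesis is thus used not merely to ``match topologies'' but to manufacture a map out of $V(Y)$; your outline gives no mechanism for this step.
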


We use Theorem~\ref{thm:main:VXY} to prove topological vector space analogues of certain results of 
Tkachenko (\cite[Theorems 4--6]{TkaSCN}).

\begin{Ltheorem} \label{thm:main:pseudocompact}
If $X$ is a  pseudocompact Tychonoff space, then every continuous  map $f\colon V(X) \rightarrow \mathbb{R}$
extends to a continuous map $\widetilde f \colon V(\beta X) \rightarrow \mathbb{R}$.
\end{Ltheorem}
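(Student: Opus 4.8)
The plan is to reduce the statement to the corresponding extension theorem for free abelian topological groups on pseudocompact spaces and then transport it across the quotient presentation supplied by Theorem~\ref{thm:main:VXY}. Write $Z = [-1,1]\times X$. Since $[-1,1]$ is compact and $X$ is pseudocompact, $Z$ is a pseudocompact Tychonoff space, and Glicksberg's theorem gives $\beta Z = [-1,1]\times\beta X$. By Theorem~\ref{thm:main:VXY} there are quotient homomorphisms $q\colon A(Z)\to V(X)$ and $Q\colon A(\beta Z)=A([-1,1]\times\beta X)\to V(\beta X)$, while the inclusions $X\hookrightarrow\beta X$ and $Z\hookrightarrow\beta Z$ induce continuous homomorphisms $\lambda\colon V(X)\to V(\beta X)$ and $j\colon A(Z)\to A(\beta Z)$ fitting into a commuting square $\lambda\circ q = Q\circ j$.

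Given a continuous $f\colon V(X)\to\mathbb{R}$, I would first form $h = f\circ q\colon A(Z)\to\mathbb{R}$, a continuous map on the free abelian group of the pseudocompact space $Z$. Applying the free abelian group version of the pseudocompact extension theorem (the result of Tkachenko that this theorem is modelled on, \cite{TkaSCN}) yields a continuous $g\colon A(\beta Z)\to\mathbb{R}$ with $g\circ j = h$. The remaining task is to descend $g$ along $Q$, i.e. to produce a continuous $\widetilde f\colon V(\beta X)\to\mathbb{R}$ with $\widetilde f\circ Q = g$. Since $Q$ is a topological quotient map, this amounts to showing that the continuous function $g$ is constant on the cosets of $M=\ker Q$.

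The hard part is precisely this descent, because $g$ is only continuous, not a homomorphism, and $\ker Q$ is strictly larger than $\ker q$: it contains relations concentrated at points of $\beta X\setminus X$. I would handle it using the explicit description of the kernel coming from Theorem~\ref{thm:main:VXY}, namely that $M$ is generated by the additivity relations $(s,b)+(t,b)-(s+t,b)$ (for $s,t,s+t\in[-1,1]$, and including the degeneracy $(0,b)$ as the case $s=t=0$) as $b$ ranges over $\beta X$. Because a continuous function invariant under each generator of a subgroup is invariant under the whole subgroup — peel off one generator at a time — it suffices to check $g(w+m)=g(w)$ for each such generator $m$ and every $w\in A(\beta Z)$. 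For fixed $s,t$ I would consider the continuous map $\Psi\colon\beta X\times A(\beta Z)\to\mathbb{R}$, $\Psi(b,w)=g\bigl(w+(s,b)+(t,b)-(s+t,b)\bigr)-g(w)$. On the subset $X\times A(Z)$ one computes directly that $\Psi$ vanishes: there $w+m\in A(Z)$, so $g$ may be replaced by $f\circ q$, and $q$ kills the relation since $s\cdot x + t\cdot x - (s+t)\cdot x = 0$ in $V(X)$. Since $A(Z)$ is dense in $A(\beta Z)$ (the subgroup generated by the dense set $Z\subseteq\beta Z$) and $X$ is dense in $\beta X$, the set $X\times A(Z)$ is dense in $\beta X\times A(\beta Z)$; continuity of $\Psi$ then forces $\Psi\equiv 0$.

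With the claim established, $g$ factors as $g=\widetilde f\circ Q$ for a unique $\widetilde f$, which is continuous because $Q$ is a quotient map. Finally $\widetilde f$ extends $f$ in the required sense: from $\widetilde f\circ Q\circ j = g\circ j = h = f\circ q$ together with $Q\circ j=\lambda\circ q$ one gets $\widetilde f\circ\lambda\circ q = f\circ q$, and surjectivity of $q$ gives $\widetilde f\circ\lambda = f$. The only steps needing genuine care are the invocation of the pseudocompact extension theorem at the group level and the kernel description feeding the density argument; everything else is formal diagram chasing.
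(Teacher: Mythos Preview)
Your proposal is correct and shares the paper's global strategy---lift $f$ along the quotient of Theorem~\ref{thm:main:VXY}, extend via Tkachenko's pseudocompact extension theorem, then descend---but the descent step is organized differently. The paper first proves, as part~(a) of a strengthened Theorem~\ref{thm:main:pseudocompact}$'$, that the natural map $V(X)\to V(X,\beta X)$ is a topological isomorphism (deduced from the known analogue $A(Z)\cong A(Z,\beta Z)$ for pseudocompact~$Z$); this makes $V(X)\hookrightarrow V(\beta X)$ an embedding, hence $\ker\pi_1$ is dense in $\ker\pi_2$, and a single net argument ($x_\alpha\to x$ in $F(Z)$, $z_\alpha\to z$ with $z_\alpha\in\ker\pi_1$) disposes of the descent without ever naming generators of the kernel. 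You instead bypass part~(a) entirely by describing $\ker Q$ explicitly via the additivity relations and checking invariance generator-by-generator through a density argument on $\beta X\times A(\beta Z)$. Your route is more hands-on and self-contained for the extension statement alone; the paper's detour through part~(a) yields the embedding $V(X)\hookrightarrow V(\beta X)$ as an independent dividend and keeps the descent generator-free. Two small points worth flagging: you invoke an $A$-version of Tkachenko's result where the paper cites only the $F$-version (Theorem~\ref{fact:Tkachenko:pseudocompact}), and the assertion that $\ker Q$ is generated by the relations $(s,b)+(t,b)-(s{+}t,b)$, while true, is not actually contained in Theorem~\ref{thm:main:VXY} and would need a short separate verification.
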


\begin{Ltheorem} \label{thm:main:cbly}
Let $X$ be a sequentially compact Tychonoff space such that $X^n$ is normal for every $n \in \mathbb{N}$.

\begin{enumerate}
    \item 
    If $X$ is a sequential space, then so is $V(X)$.
    
    \item
    If $X^n$ is a $k$-space for every $n \in \mathbb{N}$, then $V(X)$ is a $k$-space.
\end{enumerate}
\end{Ltheorem}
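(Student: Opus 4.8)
The plan is to reduce both parts to the corresponding statements for free abelian topological groups (\cite[Theorems 4--6]{TkaSCN}) by means of the quotient presentation supplied by Theorem~\ref{thm:main:VXY}. Put $Z \coloneqq [-1,1]\times X$. Theorem~\ref{thm:main:VXY} exhibits $V(X)$ as a topological quotient of $A(Z)$ (and, by the same theorem, of $F(Z)$), and both sequentiality and the $k$-space property pass to quotients, since the canonical map $A(Z)\to V(X)$ is an open continuous surjection and each of these two classes of spaces is closed under quotient maps. Thus it suffices to show that $A(Z)$ is sequential in part~(a) and a $k$-space in part~(b); for this I would verify that $Z$ satisfies the hypotheses of Tkachenko's theorem. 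That $Z$ is Tychonoff is clear, and $Z$ is sequentially compact because a finite product of sequentially compact spaces is sequentially compact and $[-1,1]$ is compact metrizable.

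The crux is the normality of $Z^n \cong [-1,1]^n\times X^n$ for every $n$. Here I would first note that $X$, being sequentially compact, is countably compact, hence so is its finite power $X^n$; and a countably compact space is automatically countably paracompact, since every countable open cover admits a finite --- thus locally finite --- subcover. As $X^n$ is normal by hypothesis, a standard product theorem for normality (a form of Dowker's theorem: the product of a normal countably paracompact space with a compact metrizable space is normal) gives that $[-1,1]^n\times X^n$ is normal. I expect this passage --- extracting countable paracompactness from sequential compactness and feeding it into the product theorem for normality --- to be the least routine step, and the main place where the sequential-compactness hypothesis is genuinely used.

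It remains to transfer the space-level hypothesis of each part from $X$ to $Z$. For part~(a), I must check that $Z=[-1,1]\times X$ is sequential whenever $X$ is. Since a sequential space is a quotient of a metrizable space $M$ and $[-1,1]$ is locally compact Hausdorff, Whitehead's theorem makes $M\times[-1,1]\to X\times[-1,1]$ a quotient map; because $M\times[-1,1]$ is metrizable, and hence sequential, its quotient $Z$ is sequential, and \cite{TkaSCN} then yields that $A(Z)$ is sequential. For part~(b), I must check that $Z^n\cong[-1,1]^n\times X^n$ is a $k$-space whenever $X^n$ is; this is immediate from the theorem of Cohen and Whitehead that the product of a $k$-space with a locally compact Hausdorff space --- here $[-1,1]^n$ --- is again a $k$-space, whence $A(Z)$ is a $k$-space. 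In either case the conclusion for $V(X)$ follows by passing through the quotient map $A(Z)\to V(X)$.
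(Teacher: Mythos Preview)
Your approach coincides with the paper's: verify that $Z=[-1,1]\times X$ meets the hypotheses of Tkachenko's theorem and then pass to the quotient $V(X)$. One genuine slip to fix: you write that $X$ is countably compact ``hence so is its finite power $X^n$,'' but countable compactness is \emph{not} preserved by finite products (Nov\'ak's classical example). The paper instead argues---just as you already do for $Z$ itself---that \emph{sequential} compactness passes to $X^n$, and only then deduces countable compactness (hence countable paracompactness) of $X^n$; with that reroute your normality argument for $Z^n$ goes through exactly as you describe. The only other, purely cosmetic, difference is that the paper works with $F(Z)$ rather than $A(Z)$, since the cited Tkachenko results are stated for the free group; this is immaterial, as $A(Z)$ is a quotient of $F(Z)$.
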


As an application, we show that certain families of subspaces of $V(X)$ satisfy the so-called
{\itshape algebraic colimit property} defined in the authors' previous work (\cite{RDGL1}).
Given a directed family $\{G_\alpha\}_{\alpha \in \mathbb{I}}$ of topological groups with closed embeddings as bonding maps, their union $G=\bigcup\limits_{\alpha\in \mathbb I} G_\alpha$ can be equipped with two topologies: the {\itshape colimit space topology} defined as the finest topology $\mathscr{T}$ making each map $G_\alpha \rightarrow G$ continuous, and the {\itshape colimit group topology}, defined as the finest {\itshape group topology} $\mathscr{A}$ making each map $G_\alpha \rightarrow G$ continuous. The former is always finer than the latter, which begs the question of when the two topologies coincide. 
We say that $\{G_\alpha\}_{\alpha \in \mathbb{I}}$ satisfies the {\em algebraic colimit property} (briefly, {\itshape ACP}) if $\mathscr{T}=\mathscr{A}$, that is, if the colimit of $\{G_\alpha\}_{\alpha \in \mathbb{I}}$ in the category $\mathsf{Top}$ of topological spaces and continuous maps coincides with the colimit in the category $\mathsf{Grp(Top)}$ of topological groups and their continuous homomorphisms.

Recall that a family  $\{X_\alpha\}_{\alpha \in \mathbb{I}}$ of subsets of a space is {\itshape long} if every
countable subfamily has an upper bound, that is, for every countable $J\subseteq \mathbb{I}$, there
is $i_0$ such that $X_j \subseteq X_{i_0}$ for every $j \in J$ (\cite[2.2]{RDGL1}).

\begin{Ltheorem} \label{thm:main:ACP}
Let $X$ be a countably compact sequential Tychonoff  space such that $X^n$ is normal for every
$n \in \mathbb{N}$. Let $\{X_\alpha\}_{\alpha \in \mathbb{I}}$ be a long family of subspaces
of $X$ such that  $X = \bigcup\limits_{\alpha \in\mathbb{I}} X_\alpha$. Then:

\begin{enumerate}
    \item 
    $\{V(X_\alpha,X)\}_{\alpha \in \mathbb{I}}$ satisfies ACP and 
    $\colim\limits_{\alpha \in \mathbb{I}} V(X_\alpha,X) = V(X)$; and
    
    \item
    if each $X_\alpha$ is compact, then $\colim\limits_{\alpha \in \mathbb{I}} V(X_\alpha) = V(X)$.
\end{enumerate}

\end{Ltheorem}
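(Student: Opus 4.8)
The plan is to prove both parts of Theorem~\ref{thm:main:ACP} by reducing the statement to a single comparison of topologies on the set $V(X)=\bigcup_{\alpha\in\mathbb{I}}V(X_\alpha,X)$. First I would record that the family is directed and exhausts $V(X)$: any $v\in V(X)$ is a finite combination $\sum_i r_i\iota_X(x_i)$ with $x_i\in X=\bigcup_\alpha X_\alpha$, and the long (in particular directed) property yields a single $\gamma$ with all $x_i\in X_\gamma$, so $v\in V(X_\gamma,X)$. Writing $\mathscr{V}$ for the given topology of $V(X)$, $\mathscr{A}$ for the colimit group topology, and $\mathscr{T}$ for the colimit space topology of $\{V(X_\alpha,X)\}$, one always has $\mathscr{V}\subseteq\mathscr{A}\subseteq\mathscr{T}$, since $\mathscr{V}$ is a vector (hence group) topology making every inclusion continuous while $\mathscr{T}$ is finest among all such topologies. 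Consequently it suffices to prove the reverse inclusion $\mathscr{T}\subseteq\mathscr{V}$, i.e.\ that every $\mathscr{T}$-closed set is $\mathscr{V}$-closed: this forces $\mathscr{V}=\mathscr{A}=\mathscr{T}$, which is exactly the assertion that $\{V(X_\alpha,X)\}$ satisfies ACP and that $\colim_\alpha V(X_\alpha,X)=V(X)$.

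Next I would arrange to detect $\mathscr{V}$ by sequences. The key preliminary observation is that a countably compact sequential space is sequentially compact: if a sequence in $X$ had no convergent subsequence, then after discarding repeated values its range $S$ would be infinite, and any convergent sequence drawn from $S$ would take only finitely many values and hence converge to a point of its range; thus every subset of $S$ is sequentially closed, sequentiality of $X$ makes every subset of $S$ closed, and $S$ becomes an infinite closed discrete subset of $X$, which is impossible in a countably compact space. Therefore $X$ is sequentially compact, and since $X$ is Tychonoff with $X^n$ normal for all $n$, Theorem~\ref{thm:main:cbly}(1) applies and $V(X)$ is sequential. In particular a subset of $V(X)$ is $\mathscr{V}$-closed precisely when it is sequentially closed.

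With these in place the long hypothesis does the real work. Let $C\subseteq V(X)$ be $\mathscr{T}$-closed, that is, $C\cap V(X_\alpha,X)$ is closed in $V(X_\alpha,X)$ for every $\alpha$, and let $(c_n)$ be a sequence in $C$ converging in $V(X)$ to some $c$. The set $\{c_n\}\cup\{c\}$ is countable and contained in $\bigcup_\alpha V(X_\alpha,X)$, so by the long property there is a single index $\gamma$ with $\{c_n\}\cup\{c\}\subseteq V(X_\gamma,X)$. Since $V(X_\gamma,X)$ carries the subspace topology, $c_n\to c$ in $V(X_\gamma,X)$; as $C\cap V(X_\gamma,X)$ is closed there and contains every $c_n$, it contains $c$, whence $c\in C$. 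Thus $C$ is sequentially closed, hence $\mathscr{V}$-closed, which proves $\mathscr{T}\subseteq\mathscr{V}$ and completes part~(1).

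For part~(2) I would reduce to part~(1) by identifying $V(X_\alpha)$ with $V(X_\alpha,X)$ whenever $X_\alpha$ is compact. The natural continuous linear bijection $V(X_\alpha)\to V(X_\alpha,X)$ supplied by the universal property is a topological isomorphism in the compact case: by Theorem~\ref{thm:main:VXY} the two sides are quotients of $A([-1,1]\times X_\alpha)$ and of $A([-1,1]\times X_\alpha,[-1,1]\times X)$ respectively, and for the compact space $[-1,1]\times X_\alpha$ the free abelian topological group coincides with the subgroup it generates, so the corresponding quotients agree. Granting this, $\colim_\alpha V(X_\alpha)=\colim_\alpha V(X_\alpha,X)=V(X)$ by part~(1). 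I expect the main obstacle to be the sequential input rather than the bookkeeping: everything hinges on Theorem~\ref{thm:main:cbly}(1) together with the verification that the long property genuinely confines each convergent sequence and its limit to a single member of the family, and the compact-case identification in part~(2)---transferring the classical free-topological-group fact through the quotient presentation of Theorem~\ref{thm:main:VXY}---is the other point that requires care.
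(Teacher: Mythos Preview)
Your argument for part~(a) is correct and is essentially the paper's proof with the black box unpacked: the paper observes that $V(X)$ is sequential (via Theorem~\ref{thm:main:cbly}(a)), passes to countable tightness, and then cites \cite[2.3]{RDGL1}; you instead prove that special case directly by trapping a convergent sequence together with its limit inside a single $V(X_\gamma,X)$ using the long hypothesis. Both routes rest on the same two ingredients---sequentiality of $V(X)$ and the long property---so there is no substantive difference.

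For part~(b), however, your justification of the isomorphism $V(X_\alpha)\cong V(X_\alpha,X)$ via Theorem~\ref{thm:main:VXY} does not go through as written. Theorem~\ref{thm:main:VXY} (and its refinement Theorem~\ref{thm:main:VXY}$'$) assumes that the smaller space is \emph{dense} in the larger one; this is how the quotient property of $p\colon A([-1,1]\times X,[-1,1]\times Y)\to V(X,Y)$ is obtained (density is used in the proof to extend $\pi$ to the completion $\widetilde E$). In your situation $X_\alpha$ is merely a compact subspace of $X$, not a dense one, so Theorem~\ref{thm:main:VXY} does not yield that $V(X_\alpha,X)$ is a quotient of $A([-1,1]\times X_\alpha,[-1,1]\times X)$. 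Knowing only that $A([-1,1]\times X_\alpha)\to A([-1,1]\times X_\alpha,[-1,1]\times X)$ is an isomorphism and that $V(X_\alpha)$ is a quotient of the former then tells you nothing new about the topology of $V(X_\alpha,X)$; the commutative square gives $j\circ q_1=p_1\circ i$ with $q_1$ a quotient and $i$ an isomorphism, but without $p_1$ being a quotient you cannot conclude that $j$ is open. The paper closes this gap by citing the result of Gabriyelyan and Morris \cite[3.12]{GabMor} that $V(K)\to V(K,Y)$ is a topological isomorphism whenever $K$ is compact, which is exactly the missing input; once you have that, your reduction to part~(a) is identical to the paper's.
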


It follows from Theorem~\ref{thm:main:ACP} that $V(\omega_1) = \colim\limits_{\alpha < \omega_1} V([0,\alpha])$.  Theorem~\ref{thm:main:ACP} also has another  interesting corollary. The product $\mathbb{L}_{\geq 0}:=\omega_1 \times [0,1)$ equipped with order topology generated by the lexicographic order is called the {\itshape Closed Long Ray}. It turns out that $X=\mathbb{L}_{\geq 0}$ and $X_\alpha=[(0,0),(\alpha,0)]$ satisfy the conditions
of Theorem~\ref{thm:main:ACP}, and consequently 
$V(\mathbb{L}_{\geq 0}) = \colim\limits_{\alpha < \omega_1} ([(0,0),(\alpha,0)])$ (see Corollary~\ref{apps:cor:LongRay}).

The paper is structured as follows. Theorem~\ref{thm:main:VXY} is proven in \S\ref{sect:VXY},
Theorems~\ref{thm:main:pseudocompact} and~\ref{thm:main:cbly} are proven in \S\ref{sect:pseudocompact},
and the proof of Theorem~\ref{thm:main:ACP} and its applications are presented in \S\ref{sect:applications}.

\section{The free topological vector space is a quotient}

\label{sect:VXY}

In this section, we prove Theorem~\ref{thm:main:VXY} by establishing a more elaborate statement. Let $Y$ be a topological space. The composite 
\begin{align}
[-1,1]\times Y \to<500>^{\operatorname{id}_{[-1,1]} \times \iota_Y} 
[-1,1]\times V(Y) \to<500>^{- \cdot -} V(Y)
\end{align}
of $\operatorname{id}_{[-1,1]} \times \iota_Y$ with the scalar multiplication on $V(Y)$ gives rise
to a continuous group homomorphism
\begin{align}  
A([-1,1]\times Y)  \to<500>^{q}  V(Y).
\end{align}

\begin{Ltheorem*}[\ref{thm:main:VXY}$\empty^\prime$]
Let $X$ be a dense subspace of a Tychonoff space $Y$, let 
$p$ denote the restriction of $q$ to $A([-1,1]\times X,[-1,1]\times Y)$, and 
consider the following commutative diagram:
\begin{align}
\bfig
\square|alra|/->`^{(}->`^{(}->`->/<1200,500>[{A([-1,1]\times X,[-1,1]\times Y)}`V(X,Y)`{A([-1,1]\times Y)}`V(Y);p`dense`dense`q]
\efig
\end{align}
Then $p$ and $q$ are quotient homomorphisms of topological groups.
\end{Ltheorem*}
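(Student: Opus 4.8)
The plan is to show that each of $q$ and $p$ induces the correct topology on its target by reducing to the universal property of the free topological vector space. I treat $q$ first. Write $T$ for the quotient topology induced on the underlying vector space of $V(Y)$ by the continuous surjective homomorphism $q$ (which is onto, since every real number is an integer combination of numbers in $[-1,1]$). Because $q$ is continuous into $V(Y)$, the topology $T$ is finer than the given topology of $V(Y)$, so it suffices to prove the reverse inequality, that is, that the identity $V(Y)\to(V(Y),T)$ is continuous. As $q$ is a quotient homomorphism of abelian groups by construction, $(V(Y),T)$ is an abelian topological group; the substantive point is that it is in fact a topological vector space.

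For each $\lambda\in[-1,1]$ the self-map $(t,y)\mapsto(\lambda t,y)$ of $[-1,1]\times Y$ is continuous and induces a continuous endomorphism $\sigma_\lambda$ of $A([-1,1]\times Y)$ satisfying $q\circ\sigma_\lambda=\lambda\cdot q$. I would first check that these assemble into a jointly continuous map
\begin{equation*}
\Sigma\colon [-1,1]\times A([-1,1]\times Y)\to A([-1,1]\times Y),\qquad \Sigma(\lambda,g)=\sigma_\lambda(g).
\end{equation*}
This follows from the universal property of $A([-1,1]\times Y)$ together with the exponential law, valid since $[-1,1]$ is locally compact: the continuous assignment $(\lambda,(t,y))\mapsto(\lambda t,y)$ transposes to a continuous map from $[-1,1]\times Y$ into the topological group $C([-1,1],A([-1,1]\times Y))$ of continuous functions with the compact-open topology, which extends to a continuous homomorphism on $A([-1,1]\times Y)$ and transposes back to $\Sigma$. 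Since $[-1,1]$ is locally compact, Whitehead's theorem shows that $\mathrm{id}_{[-1,1]}\times q$ is again a quotient map, and as $(\lambda,g)\mapsto\lambda\cdot q(g)=q(\Sigma(\lambda,g))$ is continuous, the scalar multiplication $[-1,1]\times(V(Y),T)\to(V(Y),T)$ is continuous. Writing an arbitrary scalar as $n\mu$ with $n\in\mathbb{Z}$ and $\mu\in[-1,1]$, and using $\lambda v-\lambda_0 v_0=\lambda(v-v_0)+(\lambda-\lambda_0)v_0$, upgrades this to joint continuity on all of $\mathbb{R}\times(V(Y),T)$; as $T$ refines a Hausdorff vector topology, $(V(Y),T)$ is a topological vector space.

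Next, the map $y\mapsto(1,y)$ into $A([-1,1]\times Y)$ shows that $\iota_Y\colon Y\to(V(Y),T)$ is continuous, so the universal property of $V(Y)$ produces a continuous linear map $V(Y)\to(V(Y),T)$ extending it. This map agrees with the identity on the generating set $\iota_Y(Y)$, hence is the identity, so $\mathrm{id}\colon V(Y)\to(V(Y),T)$ is continuous and $T$ coincides with the topology of $V(Y)$. Thus $q$ is a quotient homomorphism.

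For $p$ I would run the same machinery: the endomorphisms $\sigma_\lambda$ preserve the subgroup $A([-1,1]\times X,[-1,1]\times Y)$, so the quotient topology $S$ it induces on $V(X,Y)$ is again a vector topology refining the subspace topology, and $p$ is a quotient homomorphism onto $(V(X,Y),S)$. The remaining and main difficulty is to show that $S$ equals the subspace topology inherited from $V(Y)$, equivalently, that the continuous linear injection $(V(X,Y),S)\to V(Y)$, which has dense image because $X$ is dense in $Y$, is a topological embedding. Here the density hypothesis is essential: since $q$ is open we have $q^{-1}(V(X,Y))=A([-1,1]\times X,[-1,1]\times Y)+\ker q$ and $q$ restricts to an open map on $q^{-1}(V(X,Y))$, so it remains to compare neighborhoods of $0$ and to show that any element of $V(X,Y)$ represented by a small element of $A([-1,1]\times Y)$ admits a representative that is small and supported on $[-1,1]\times X$. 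I expect this neighborhood comparison---pushing the $Y$-supported representatives onto $X$ using density of $[-1,1]\times X$ in $[-1,1]\times Y$ together with the vanishing of the coefficients at points of $Y\setminus X$, uniformly over the standard neighborhood basis of $A([-1,1]\times Y)$---to be the hard part of the argument, and the place where Tychonoff-ness of $Y$ is used.
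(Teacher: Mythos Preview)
Your treatment of $q$ is correct and is essentially the paper's argument specialized to $X=Y$: put the quotient group topology on the target, verify it is a topological vector space (via the exponential law and local compactness of $[-1,1]$), and then invoke the universal property of $V(Y)$ to produce the continuous inverse. The paper organizes the scalar-multiplication step slightly differently (coherent maps $g_n$ indexed by dyadic scales rather than your $n\mu$ decomposition), but the content is the same.

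The gap is in the relative case $p$. You correctly isolate the difficulty---comparing the quotient topology $S$ on $V(X,Y)$ with the subspace topology from $V(Y)$---but you do not resolve it; you only sketch a neighborhood-comparison program (``pushing $Y$-supported representatives onto $X$'') and flag it as the hard part. The paper does \emph{not} carry out any such computation, and it is not clear your proposed route would succeed without a detailed description of a neighborhood basis of $A([-1,1]\times Y)$.

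The paper's device, which you are missing, is to pass to the \emph{completion}. With $E:=A([-1,1]\times X,[-1,1]\times Y)/\ker p$ and $\bar p\colon E\to V(X,Y)$ the induced bijection, one uses Hausdorffness (this is where Tychonoffness enters, via Lemma~\ref{lemma:VXTych}) to form the group completion $\widetilde E$. Density of $X$ in $Y$ makes $A([-1,1]\times X,[-1,1]\times Y)$ dense in $A([-1,1]\times Y)$, so the projection $\pi\colon A([-1,1]\times X,[-1,1]\times Y)\to E$ extends to a continuous $\widetilde\pi\colon A([-1,1]\times Y)\to\widetilde E$. One then shows (as you do for $q$) that $E$, hence $\widetilde E$, is a topological vector space. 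Now the crucial step: the continuous map $Y\to\widetilde E$, $y\mapsto\widetilde\pi(1,y)$, invokes the universal property of $V(Y)$---not of $V(X,Y)$, which has none---to yield a continuous linear $\widetilde\varphi\colon V(Y)\to\widetilde E$. Its restriction $\varphi\colon V(X,Y)\to E$ is checked on generators to be the inverse of $\bar p$, so $\bar p$ is a homeomorphism and $p$ is a quotient. The completion is exactly what lets you trade your intractable neighborhood comparison for a soft universal-property argument on the ambient $V(Y)$.
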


Before turning to the proof of Theorem~\ref{thm:main:VXY}$\empty^\prime$, we first recall a 
well-known result whose proof is, alas, rarely spelled out.


\begin{lemma} \label{lemma:VXTych}
If $X$ is a Tychonoff space, then so is $V(X)$, and 
$\iota_X\colon X\rightarrow V(X)$ is an embedding.
\end{lemma}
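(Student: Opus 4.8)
The plan is to use the universal property to push $V(X)$ into a large, manifestly Tychonoff topological vector space, namely a product of lines indexed by $C(X):=C(X,\mathbb{R})$. Since $X$ is Tychonoff, the evaluation map $e\colon X\to\mathbb{R}^{C(X)}$, $e(x)=(f(x))_{f\in C(X)}$, is a topological embedding. As $\mathbb{R}^{C(X)}$ is a topological vector space and $e$ is continuous, the universal property yields a continuous linear map $\bar e\colon V(X)\to\mathbb{R}^{C(X)}$ with $\bar e\circ\iota_X=e$, whose $f$-th coordinate is the canonical extension $\bar f\colon V(X)\to\mathbb{R}$ of $f$. I will then derive both assertions from properties of $\bar e$.

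The embedding statement is purely formal: from $\bar e\circ\iota_X=e$ with $e$ an embedding and $\bar e$ continuous it follows that $\iota_X$ is injective and that $\iota_X^{-1}=e^{-1}\circ(\bar e|_{\iota_X(X)})$ is continuous, so $\iota_X$ is an embedding. This step uses nothing about the algebraic structure of $V(X)$. For the Tychonoff property I will instead show that $V(X)$ is Hausdorff and invoke the classical fact that every Hausdorff topological group is Tychonoff; since $V(X)$ is a topological vector space, this applies. It suffices to prove that $\bar e$ is injective, for then $V(X)$, admitting an injective continuous map into the Hausdorff space $\mathbb{R}^{C(X)}$, is itself Hausdorff. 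Injectivity of $\bar e$ amounts to the assertion that for every nonzero $u\in V(X)$ there is $f\in C(X)$ with $\bar f(u)\neq 0$.

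The crux---and the point at which the hypothesis that $X$ is Tychonoff really does the work---is to pin down the elements of $V(X)$ and separate them by such functionals. I will first record that $V(X)$ is \emph{algebraically free} on $\iota_X(X)$. Spanning follows from a retraction argument: writing $S$ for the linear span of $\iota_X(X)$ with its subspace topology (a topological vector space), the corestriction $\iota_X'\colon X\to S$ extends to a continuous linear map $r\colon V(X)\to S$; the composite of $r$ with the inclusion $S\hookrightarrow V(X)$ agrees with $\operatorname{id}_{V(X)}$ on $\iota_X(X)$, hence equals it by the uniqueness clause of the universal property, forcing $S=V(X)$. Linear independence, and the required separation, follow from the elementary fact that a finite subset $\{x_1,\dots,x_n\}$ of a Tychonoff space is $C$-embedded: choosing $f\in C(X)$ with $f(x_{i_0})=1$ and $f(x_i)=0$ for $i\neq i_0$ and applying $\bar f$ to $u=\sum_i\lambda_i\iota_X(x_i)$ extracts $\lambda_{i_0}$, which simultaneously shows the $\iota_X(x_i)$ are independent and that any $u\neq 0$ is detected by some $\bar f$.

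Assembling these, $\bar e$ is injective, $V(X)$ is Hausdorff, and therefore Tychonoff, while the embedding of $\iota_X$ has already been settled. I expect the spanning half of the freeness step to be the only genuinely nonobvious point, since it is exactly here that one must rule out elements of $V(X)$ lying merely in the \emph{closure} of the span rather than in the span itself; the separation argument and the passage from Hausdorff to Tychonoff are standard.
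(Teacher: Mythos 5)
Your proposal is correct and takes essentially the same route as the paper: both embed $V(X)$ into the product $\mathbb{R}^{C(X,\mathbb{R})}$ via the evaluation map, deduce that $\iota_X$ is an embedding from $\bar e\circ\iota_X=e$, obtain injectivity of the induced linear map by separating finitely many points with continuous functions, and conclude Tychonoffness from the fact that a Hausdorff topological group is Tychonoff. The only difference is that you explicitly justify the algebraic spanning of $V(X)$ by $\iota_X(X)$ via the retraction argument, a step the paper leaves implicit when it passes from linear independence of the evaluations to injectivity of $\widetilde{\operatorname{ev}}$.
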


\begin{proof}
Let $\mathscr{C}(X,\mathbb{R})$ denote the space of continuous real-valued functions on $X$, and
define $\operatorname{ev}\colon X \rightarrow \mathbb{R}^{\mathscr{C}(X,\mathbb{R})}$ by
$x\mapsto \operatorname{ev}_x := (f(x))_{f \in \mathscr{C}(X,\mathbb{R})}$. The map
$ \operatorname{ev}$ is continuous, and thus induces a~continuous linear homomorphism
$\widetilde{\operatorname{ev}}\colon V(X)  \rightarrow \mathbb{R}^{\mathscr{C}(X,\mathbb{R})}$.
Since $X$ is Tychonoff,   $\operatorname{ev}$ is an embedding, and so $\iota_X$ is an embedding 
as well. Furthermore, $\mathscr{C}(X,\mathbb{R})$ separates the 
points of $X$, and thus  the set $\{\operatorname{ev}_x\}_{x\in X}$ is linearly independent. 
Therefore, $\widetilde{\operatorname{ev}}$ is injective. Hence, $V(X)$ is Hausdorff. Since $V(X)$ is
also a topological group, it follows that it is also Tychonoff.
\end{proof}

\begin{proof}[Proof of Theorem~\ref{thm:main:VXY}$\empty^\prime$.]
It is clear that $p$ and $q$ are surjective.
It suffices to show that $p$ is a quotient, because one may take $X=Y$. 
Put $N:=\ker p$, $E:=A([-1,1]\times X,[-1,1]\times Y)/N$, and let $\bar p\colon E \rightarrow V(X,Y)$ denote the induced continuous bijective homomorphism. 
We prove the statement in two steps, by first showing that $E$ is a topological $\mathbb{R}$-vector space, 
and then proving that $\bar p$ is a homeomorphism.

{\itshape Step 1.} We show that $E$ is a topological $\mathbb{R}$-vector space. 
Let  $\pi\colon A([-1,1]\times X,[-1,1]\times Y) \rightarrow E$ denote the canonical 
projection. The group $E$ is Hausdorff, because $V(X,Y)$ is so (Lemma~\ref{lemma:VXTych}), and so it
admits a completion $\widetilde E$ as a topological group (\cite[1.46]{GLCLTG}). Since
$X$ is dense in $Y$, the subgroup $A([-1,1]\times X,[-1,1]\times Y)$ is dense in $A([-1,1]\times Y)$,
and thus $\pi$ extends to a continuous homomorphism 
$\widetilde{\pi}\colon A([-1,1]\times Y) \rightarrow \widetilde E$  (see \cite[1.45]{GLCLTG}).
Let $j\colon [-1,1]\times Y \rightarrow A([-1,1]\times Y)$ denote the canonical embedding.
\begin{align}
\label{diag:jpi}
\bfig
\square(0,0)|alra|/->`^{(}->``->/<1200,500>[{[-1,1]\times X}`{A([-1,1]\times X,[-1,1]\times Y)}`{[-1,1]\times Y}`{A([-1,1]\times Y)};j_{|[-1,1]\times X}```j]
\square(1200,0)|alra|/->`^{(}->`^{(}->`->/<1200,500>[{A([-1,1]\times X,[-1,1]\times Y)}`E`{A([-1,1]\times Y)}`\widetilde{E};\pi```\widetilde{\pi}]
\efig
\end{align}
For each $n \in\mathbb{N}$, let
\begin{align}
    g_n\colon (-2^n,2^n) \times [-1,1] \times Y & \longrightarrow \widetilde E \\
    (s,t,y) & \longmapsto 2^n \widetilde{\pi} j (\tfrac {st}{2^n},y).
\end{align}
Each $g_n$ is continuous. We claim that the $g_n$ are coherent, that is,
$g_{{n+1}|(-2^n,2^n) \times [-1,1] \times Y}  =g_n$. It suffices
to prove that $g_{n+1}(s,t,x)=g_n(s,t,x)$ for $(s,t,x) \in (-2^n,2^n) \times [-1,1] \times X$,
because $X$ is dense in $Y$. Since $j (\tfrac {st}{2^{n+1}},x) \in A([-1,1]\times X,[-1,1]\times Y)$,
we have
\begin{align}
     g_{n+1}(s,t,x) - g_{n}(s,t,x) & = 
     2^{n+1} \widetilde{\pi} j (\tfrac {st}{2^{n+1}},x) -  2^{n} \widetilde{\pi} j (\tfrac {st}{2^{n}},x) \\
    & = 2^{n} \pi (2 j (\tfrac {st}{2^{n+1}},x) -  j (\tfrac {st}{2^{n}},x))=0,
\end{align}
because $2 j (\tfrac {st}{2^{n+1}},x) -  j (\tfrac {st}{2^{n}},x) \in \ker p = N$.
The coherent continuous maps $\{g_n\}_{n\in\mathbb{N}}$ give rise to a continuous map
$g\colon \mathbb{R} \times [-1,1]\times Y \rightarrow \widetilde E$. By a similar argument,
it follows that $g$ is additive in the first component, that is,
\begin{align}
    g(s_1+s_2,t,y)  =  g(s_1,t,y) + g(s_2,t,y).
\end{align}
Since $\mathbb{R}$ is locally compact, it is exponentiable (\cite[3.4.8]{Engel}). Thus, $g$ corresponds to a continuous map $h\colon [-1,1]\times Y \rightarrow \mathscr{H}(\mathbb{R},\widetilde E)$, where
$\mathscr{H}(\mathbb{R},\widetilde E)$ is the group of all continuous homomorphisms 
$\mathbb{R}\rightarrow \widetilde E$, equipped with the compact-open topology.
By the universal property of free abelian topological groups, $h$ corresponds to a continuous group homomorphism 
$\bar h \colon A([-1,1]\times Y) \rightarrow \mathscr{H}(\mathbb{R},\widetilde E)$. The image of
$A([-1,1]\times X,[-1,1]\times Y)$ under $\bar h$ is contained in $\mathscr{H}(\mathbb{R},E)$, and
so one obtains a continuous group homomorphism
\begin{align}
    \mu \colon A([-1,1]\times X,[-1,1]\times Y) \rightarrow  \mathscr{H}(\mathbb{R},E).
\end{align}
Let $a \in N$. Then $\mu(a)(1) = \pi(a) = 0$. Since $E$ is torsion free (being algebraically isomorphic
to $V(X,Y)$), it follows that $\mu(a)(q)=0$ for every $q\in \mathbb{Q}$, and by continuity,
$\mu(a)=0$. Consequently, $N \subseteq \ker \mu$, and so $\mu$ factors  through a continuous group
homomorphism 
$\bar\mu \colon E \rightarrow  \mathscr{H}(\mathbb{R},E)$.
Using the exponentiability of $\mathbb{R}$ once again, we obtain a continuous 
$\mathbb{Z}$-bilinear\footnote{For greater clarity, {\itshape $\mathbb{Z}$-bilinear} means $m(r_1 + r_2,e) = m(r_1,e) + m(r_2,e)$ and 
$m(r,e_1+e_2) = m(r,e_1) + m(r,e_2)$.} map
$m\colon \mathbb{R} \times E \rightarrow E$.
This shows that $E$ is a topological $\mathbb{R}$-vector
space.

{\itshape Step 2.} We show that $\bar p$ is a homeomorphism by constructing its inverse. Since $E$ is a topological $\mathbb{R}$-vector
space, so is its group completion $\widetilde E$. By the universal property of $V(Y)$, 
the continuous map $\widetilde \pi j_{|\{1\} \times Y}\colon Y \rightarrow \widetilde E$ (see diagram~\ref{diag:jpi})
factors through
a unique continuous homomorphism of topological vector spaces $\widetilde\varphi\colon V(Y)\rightarrow \widetilde E$:
\begin{align}
\bfig
\square(0,0)|alra|/->`^{(}->``->/<1200,500>[X`{A([-1,1]\times X,[-1,1]\times Y)}`Y`{A([-1,1]\times Y)};j_{|\{1\} \times X}```j_{|\{1\} \times Y}]
\square(1200,0)|alra|/->`^{(}->`^{(}->`->/<1200,500>[{A([-1,1]\times X,[-1,1]\times Y)}`E`{A([-1,1]\times Y)}`\widetilde{E};\pi```\widetilde{\pi}]
\Vtriangle(0,-500)/`->`<-/<1200,500>[Y`\widetilde{E}`V(Y);`\iota_Y`\widetilde{\varphi}]
\efig
\end{align}
Put $\varphi:=\widetilde \varphi_{|V(X,Y)}$. Then one obtains the following diagram:
\begin{align}
\bfig
\square(0,0)|alra|/->`^{(}->``/<1200,500>[X`{A([-1,1]\times X,[-1,1]\times Y)}`Y`{V(X,Y)};j_{|\{1\} \times X}```]
\square(1200,0)|arra|/->`->`^{(}->`->/<1200,500>[{A([-1,1]\times X,[-1,1]\times Y)}`E`{V(X,Y)}`\widetilde{E};\pi`p``\varphi]
\Vtriangle(0,-500)/`->`<-/<1200,500>[Y`\widetilde{E}`V(Y);`\iota_Y`\widetilde{\varphi}]
\morphism(0,500)|b|<1200,-500>[X`V(X,Y);\iota_{Y|X}]
\morphism(2400,500)<-1200,-500>[E`V(X,Y);\bar p]
\morphism(1200,0)/^{(}->/<0,-500>[V(X,Y)`V(Y);]
\efig
\end{align}
In order to prove that $\varphi \bar p(v) = v$ for every $v\in E$ (i.e., that
the middle  triangle on the right commutes), it suffices to show that
$\varphi \bar p \pi j(1,x) =   \pi j(1,x)$ for every $x\in X$, because $E$ is spanned by the image of
$\pi j_{| \{1\} \times X }$ as a vector space.
\begin{align}
\varphi \bar p \pi j(1,x) & = \varphi p  j(1,x) \\
& = \varphi \iota_{Y\mid X}(x) \\
& = \widetilde \varphi \iota_Y(x) \\
& = \widetilde \pi j(1,x) \\
& = \pi j(1,x).
\end{align}
This completes the proof.
\end{proof}

Since $A([-1,1]\times X)$ is a topological quotient group of $F([-1,1]\times X)$, 
Theorem~\ref{thm:main:VXY} follows.

\section{Free topological vector spaces on pseudocompact spaces}

\label{sect:pseudocompact}

In this section, we prove Theorems~\ref{thm:main:pseudocompact} and~\ref{thm:main:cbly}. We first recall
two analogous results for free topological groups by Tkachenko.

\begin{ftheorem}[{\cite[Theorem 6]{TkaSCN}}] \label{fact:Tkachenko:pseudocompact}
If $X$ is a pseudocompact Tychonoff space, then every continuous map
$f\colon F(X)\rightarrow \mathbb{R}$ extends to a continuous map
$\widetilde f \colon F(\beta X) \rightarrow \mathbb{R}$.
\end{ftheorem}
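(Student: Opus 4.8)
The plan is to extend $f$ along the canonical homomorphism $\eta\colon F(X)\to F(\beta X)$ induced by the inclusion $X\hookrightarrow\beta X$, whose image is the subgroup generated by the dense set $X$ and is therefore dense in $F(\beta X)$. Because $\beta X$ is compact, $F(\beta X)$ is a $k_\omega$-group: it is the colimit $\colim_n F_n(\beta X)$ of its compact word-length subspaces $F_n(\beta X)$, so a real-valued function on $F(\beta X)$ is continuous exactly when each of its restrictions to $F_n(\beta X)$ is continuous. Hence it suffices to produce continuous maps $\widetilde f_n\colon F_n(\beta X)\to\mathbb{R}$ that extend $f|_{F_n(X)}$ and are coherent in $n$, and then to paste them into the desired $\widetilde f$.

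First I would pull everything back to products. Writing $B:=X\sqcup X^{-1}\sqcup\{e\}$, pseudocompactness of $X$ gives pseudocompactness of $B$, and $\beta B=\beta X\sqcup(\beta X)^{-1}\sqcup\{e\}$ is compact. The word maps $\imath_n\colon B^n\to F_n(X)$ and $\widehat\imath_n\colon(\beta B)^n\to F_n(\beta X)$ are continuous surjections, and $\widehat\imath_n$ is a closed, hence quotient, map since $(\beta B)^n$ is compact. Constructing $\widetilde f_n$ then amounts to extending the continuous map $f\circ\imath_n\colon B^n\to\mathbb{R}$ to a continuous map on the dense superspace $(\beta B)^n$, checking that this extension is constant on the fibers of $\widehat\imath_n$ so that it descends, and verifying compatibility as $n$ increases. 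The descent and the coherence I expect to be routine bookkeeping, following from the density of $X$ in $\beta X$ together with the fact that the reduction relations defining words of length $\le n$ are closed conditions preserved under the extensions.

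The genuine obstacle is the middle step, namely extending $f\circ\imath_n$ across the product from $B^n$ to $(\beta B)^n$. This would be automatic if $B^n$, equivalently $X^n$, were pseudocompact, for then $f\circ\imath_n$ would be bounded and $(\beta B)^n$ would coincide with $\beta(B^n)$ by Glicksberg's theorem, so the extension would exist by the universal property of the Stone--\v{C}ech compactification. However, finite powers of a pseudocompact space need not be pseudocompact, so this shortcut is unavailable in general, and one must instead exploit that $X$ is $G_\delta$-dense in $\beta X$. The route I would take is through pseudometrics: a continuous $f$ induces a continuous pseudometric on $F(X)$, which by Graev's description is controlled by a continuous pseudometric on $X$; a continuous pseudometric on a pseudocompact space is bounded and extends continuously over $\beta X$, and feeding the extended pseudometric back through Graev's construction produces a continuous pseudometric on $F(\beta X)$ restricting to the original one, from which, after pinning down the value of $f$ at the identity, one recovers the extension. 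I expect the a priori boundedness of $f$ on each level $F_n(X)$ and this pseudometric-extension lemma for pseudocompact spaces to be the main difficulties.
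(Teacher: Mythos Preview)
The paper does not prove this statement: Theorem~\ref{fact:Tkachenko:pseudocompact} is recorded as a cited result of Tkachenko and is invoked as a black box in the proof of Theorem~\ref{thm:main:pseudocompact}$'$. There is therefore no proof in the paper against which to compare your proposal.

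Taken on its own terms, your overall architecture is reasonable --- exploit the $k_\omega$-decomposition $F(\beta X)=\colim_n F_n(\beta X)$ and extend $f$ level by level --- and you correctly isolate the real obstacle, namely that finite powers $X^n$ of a pseudocompact space need not be pseudocompact, so the Glicksberg shortcut $\beta(B^n)\cong(\beta B)^n$ is unavailable. The pseudometric workaround you sketch at the end, however, has a gap. From an arbitrary continuous $f$ you obtain only the continuous pseudometric $d(u,v)=|f(u)-f(v)|$, which is not translation-invariant; Graev's theorem tells you that the family of Graev extensions of continuous pseudometrics on $X$ \emph{generates} the topology of $F(X)$, but it does not assert that a given continuous (let alone non-invariant) pseudometric on $F(X)$ is globally dominated by the Graev extension of a \emph{single} continuous pseudometric on $X$. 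Without such domination you cannot simply extend one pseudometric from $X$ to $\beta X$ and read off an extension of $f$. Even the auxiliary claim that a continuous pseudometric on a pseudocompact $X$ extends to $\beta X$ is not entirely innocent, since it is a statement about a continuous function on $X\times X$, and $X\times X$ need not be pseudocompact; this can be salvaged, but it requires an argument you have not indicated. In short, the level-by-level framework is fine, but the bridge from ``continuous $f$ on $F(X)$'' to ``controlled by a pseudometric on $X$ that extends over $\beta X$'' is where the real work lies, and your sketch does not yet cross it.
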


\begin{ftheorem}[{\cite[Theorems 4--5]{TkaSCN}}] \label{fact:Tkachenko:cblycompact}
Let $X$ be a countably compact Tychonoff space such that $X^n$ is normal for every $n \in \mathbb{N}$.

\begin{enumerate}
    \item 
    If $X$ is a sequential space, then so is $F(X)$.
    
    \item
    If $X^n$ is a $k$-space for every $n \in \mathbb{N}$, then $F(X)$ is a $k$-space.
\end{enumerate}
\end{ftheorem}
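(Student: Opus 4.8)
The plan is to exploit the standard word-length filtration of $F(X)$. Write $F(X)=\bigcup_{n\in\mathbb{N}}F_n(X)$, where $F_n(X)$ consists of the words of reduced length at most $n$; each $F_n(X)$ is closed in $F(X)$ (since $X$ is Tychonoff), and for $\widetilde X:=X\oplus\{e\}\oplus X^{-1}$ there is a natural continuous surjection $i_n\colon\widetilde X^{\,n}\to F_n(X)$ taking a tuple to its product in $F(X)$. Since both sequentiality and the $k$-space property are preserved by quotient maps and are inherited by colimits along closed embeddings, I would reduce the theorem to two assertions: first, that each $F_n(X)$ is sequential (resp.\ a $k$-space); and second, that $F(X)=\colim_{n}F_n(X)$, i.e.\ that the topology of $F(X)$ is the inductive-limit topology determined by the filtration.

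For the first assertion I would note that $\widetilde X^{\,n}$ distributes as a finite topological sum of spaces, each homeomorphic to a power $X^{k}$ with $k\le n$; hence $\widetilde X^{\,n}$ is sequential (resp.\ a $k$-space) precisely when every such $X^{k}$ is. In case~(b) this is exactly the hypothesis, and since topological sums of $k$-spaces are $k$-spaces, $\widetilde X^{\,n}$ is a $k$-space. In case~(a) one must first promote the assumption that $X$ is sequential to the statement that every $X^{k}$ is sequential: here $X$ is sequentially compact (being sequential and countably compact), so each $X^{k}$ is sequentially compact, and the normality of $X^{k}$ is then used to conclude that $X^{k}$ is sequential. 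Granting that $\widetilde X^{\,n}$ has the desired property, the corresponding property of $F_n(X)$ follows as soon as $i_n$ is a quotient map, which I would obtain by proving that $i_n$ is closed: for compact $X$ this is automatic (a continuous map from a compact space to a Hausdorff space is closed), and in the countably compact case the normality of the powers of $X$ is precisely the ingredient that forces $i_n$ to be closed.

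For the second assertion I would show that a subset $A\subseteq F(X)$ meeting every $F_n(X)$ in a relatively closed set is itself closed; this is where countable compactness of $X$ is indispensable, as it governs the interaction between the group topology and the filtration (the statement fails already for non-countably-compact spaces, such as the free group on a nontrivial convergent sequence). Once both assertions are in hand the theorem follows formally: if $A$ is sequentially closed (resp.\ $k$-closed) in $F(X)$, then each $A\cap F_n(X)$ is sequentially closed (resp.\ $k$-closed) in the closed, sequential (resp.\ $k$-) subspace $F_n(X)$, hence closed there, and the inductive-limit description then yields that $A$ is closed in $F(X)$.

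I expect the two genuine obstacles to be exactly the two nonformal inputs above: the closedness of $i_n$ in the first assertion and the inductive-limit identity $F(X)=\colim_{n}F_n(X)$ in the second. Both fail for general $X$, and between them they encode the full force of the hypotheses — normality of all finite powers for the former, countable compactness for the latter. Within case~(a) the auxiliary claim that the finite powers of a countably compact sequential space with normal powers are again sequential is of comparable subtlety and is really the crux of the sequential statement.
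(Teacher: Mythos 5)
First, a point of order: the paper itself contains no proof of this statement --- it is imported verbatim from Tkachenko (\cite[Theorems 4--5]{TkaSCN}) --- so your proposal can only be measured against the proof in the cited source. Measured that way, your architecture is the right one and is essentially Tkachenko's: filter $F(X)$ by the closed subspaces $F_n(X)$, show the multiplication maps $i_n\colon\widetilde X^{\,n}\to F_n(X)$ are closed (hence quotient), using normality of the finite powers, show $F(X)=\colim_n F_n(X)$, using countable compactness, and then transfer sequentiality and the $k$-property formally. You correctly identify those two lemmas as the nonformal content, but you also leave both entirely unproved, so what you have is a roadmap rather than a proof --- the deferred lemmas \emph{are} the theorem. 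For the record, your parenthetical claim that the colimit identity fails ``for the free group on a nontrivial convergent sequence'' is false: a convergent sequence together with its limit is compact, and for compact (indeed $k_\omega$) spaces $F(X)$ does carry the inductive-limit topology of the $F_n(X)$ (Graev, Mack--Morris--Ordman); the standard failure is rather $X=\mathbb{Q}$.

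The genuine gap is in case (a), where you argue that each $X^k$, being sequentially compact and normal, is sequential. Sequential compactness plus normality does not imply sequentiality, and the counterexample appears in this very paper: $X=[0,\omega_1]$ is compact --- so every $X^k$ is compact Hausdorff, hence normal --- and sequentially compact, yet not sequential (the Example at the end of \S\ref{sect:pseudocompact}). The telltale symptom is that your case (a) uses the hypothesis that $X$ is sequential only to extract sequential compactness; since $[0,\omega_1]$ is sequentially compact anyway, your chain of reductions would run through unchanged and ``prove'' that $F([0,\omega_1])$ is sequential, which it is not, because $[0,\omega_1]$ sits in it as a closed subspace. What the step actually requires is a product theorem for sequential spaces: a countably compact space is trivially locally countably compact, and by the theorem of Boehme and Tanaka the product of a sequential space with a locally countably compact sequential space is sequential; induction on $k$ then yields that every $X^k$ is sequential, using the sequentiality of $X$ itself and no normality at all (normality is reserved for the closedness of $i_n$). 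With that substitution, and with actual proofs of the two deferred lemmas, your outline becomes the argument of the cited source.
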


Combining Theorems~\ref{fact:Tkachenko:pseudocompact} and~\ref{thm:main:VXY} yields the following result, which contains Theorem~\ref{thm:main:pseudocompact}.

\begin{Ltheorem*}[\ref{thm:main:pseudocompact}$\empty^\prime$]
If $X$ is a pseudocompact Tychonoff space, then:

\begin{enumerate}
    \item 
    the natural continuous homomorphism $V(X) \rightarrow V(X,\beta X)$ is a topological
    isomorphism; and
    
    \item
    every continuous  map $f\colon V(X) \rightarrow \mathbb{R}$
extends to a continuous map $\widetilde f \colon V(\beta X) \rightarrow \mathbb{R}$.
\end{enumerate}

\end{Ltheorem*}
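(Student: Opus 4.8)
The plan is to prove part (2) first and then deduce part (1) from it. Throughout, write $Z:=[-1,1]\times X$. The first thing I would record is that $Z$ is pseudocompact---being the product of the compact space $[-1,1]$ with the pseudocompact space $X$---and that $\beta Z=[-1,1]\times\beta X$ (these are standard properties of products with a compact factor). I would also note that the canonical linear map $\Phi\colon V(X)\rightarrow V(\beta X)$ induced by $X\hookrightarrow\beta X$ is \emph{injective}, since it carries the basis $\iota_X(X)$ onto the linearly independent set $\iota_{\beta X}(X)$ (Lemma~\ref{lemma:VXTych}), and that its image is exactly $V(X,\beta X)$.

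For part (2), given a continuous $f\colon V(X)\rightarrow\mathbb{R}$, I would pull it back along the quotient $F(Z)\twoheadrightarrow A(Z)\xrightarrow{q_X}V(X)$ supplied by Theorem~\ref{thm:main:VXY} to obtain a continuous map $g\colon F(Z)\rightarrow\mathbb{R}$. Since $Z$ is pseudocompact with $\beta Z=[-1,1]\times\beta X$, Theorem~\ref{fact:Tkachenko:pseudocompact} extends $g$ to a continuous map $h\colon F([-1,1]\times\beta X)\rightarrow\mathbb{R}$. Writing $S\colon F([-1,1]\times\beta X)\rightarrow V(\beta X)$ for the composite quotient homomorphism, the goal is to show that $h$ factors as $h=\widetilde f\circ S$; because $S$ is a quotient map, it suffices to check that $h$ is constant on the fibres of $S$, that is, that $h(w\cdot n)=h(w)$ for every $w\in F([-1,1]\times\beta X)$ and every $n\in\ker S$.

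This fibre-invariance is the heart of the matter, and the main obstacle, precisely because $h$ is only a continuous map, not a homomorphism. My plan is to exploit that $\ker S$ is generated by elements that depend continuously on points of $[-1,1]\times\beta X$ and that already lie in $F(Z)$ when those points lie in $X$: the commutators of the canonical generators and (lifts of) the scalar relations $(t_1,y)+(t_2,y)-(t_1+t_2,y)$ and $k\,(t,y)-(kt,y)$, where $(t,y)$ denotes the generator indexed by the point $(t,y)$. On the dense subgroup $F(Z)$ the map $h$ coincides with $g$, which factors through $S|_{F(Z)}$ (here I use that $\Phi$ is injective, so $\ker(S|_{F(Z)})$ is exactly the kernel of $q_X$ composed with abelianization); hence $h(w\cdot n)=h(w)$ holds whenever both $w\in F(Z)$ and the parameters of the generator $n$ lie in $X$. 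For a continuously parametrised family of such generators $n_\lambda$, the map $(\lambda,w)\mapsto h(w\cdot n_\lambda)-h(w)$ is continuous and vanishes on the dense set where $\lambda$ ranges over the $X$-parameters and $w\in F(Z)$; by continuity it vanishes identically. Thus $h$ is invariant under every generator for \emph{all} $w$, and therefore under the whole subgroup $\ker S$. Consequently $h$ descends to a continuous $\widetilde f\colon V(\beta X)\rightarrow\mathbb{R}$ with $\widetilde f\circ S=h$, and restricting along $\Phi$ shows that $\widetilde f$ extends $f$.

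Finally, I would deduce part (1) from part (2). The map $\Phi\colon V(X)\rightarrow V(X,\beta X)$ is a continuous bijection (injective as above, and surjective since $V(X,\beta X)$ is the span of $\iota_{\beta X}(X)$), so I only need it to be open. Since $V(X)$ is Tychonoff (Lemma~\ref{lemma:VXTych}), its topology is the initial topology induced by $\mathscr{C}(V(X),\mathbb{R})$. Given any continuous $f\colon V(X)\rightarrow\mathbb{R}$, part (2) provides a continuous extension $\widetilde f\colon V(\beta X)\rightarrow\mathbb{R}$, so that $f=\widetilde f\circ\Phi$ with $\widetilde f$ continuous on $V(\beta X)$; hence $f$ is continuous for the topology that $\Phi$ pulls back from the subspace topology of $V(X,\beta X)\subseteq V(\beta X)$. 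As this holds for every such $f$, and the topology of $V(X)$ is initial with respect to these functions, the pulled-back subspace topology is finer than that of $V(X)$; together with the continuity of $\Phi$ the two topologies coincide, so $\Phi$ is a topological isomorphism.
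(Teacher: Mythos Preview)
Your deduction of (a) from (b) via the initial topology on the Tychonoff space $V(X)$ is correct and pleasant, but your argument for (b) has a gap at the step ``Thus $h$ is invariant under every generator for all $w$, and therefore under the whole subgroup $\ker S$.'' The set $N_h := \{n : h(wn) = h(w)\text{ for all }w\}$ is indeed a subgroup, but because $h$ is not a homomorphism there is no reason for it to be \emph{normal}. Your listed elements---commutators of the canonical generators and scalar-relation words---only \emph{normally} generate $\ker S$; in particular, commutators of the canonical generators do not generate $[F,F]$ as a subgroup (already in the free group on two letters they span only a cyclic group). So $\ker S\subseteq N_h$ does not follow from what you have shown.

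The paper avoids this by reversing the order. It proves (a) first, and by a much shorter route: since $[-1,1]\times X$ is pseudocompact and $\beta([-1,1]\times X)=[-1,1]\times\beta X$ by Glicksberg, the known isomorphism $A(Z)\cong A(Z,\beta Z)$ for pseudocompact $Z$ combines with the two quotient maps of Theorem~\ref{thm:main:VXY}$^\prime$ (for the pairs $(X,X)$ and $(X,\beta X)$) to force $V(X)\cong V(X,\beta X)$. With (a) in hand, $V(X)\hookrightarrow V(\beta X)$ is an embedding, whence density of $\ker\pi_1$ in $\ker\pi_2$ follows from a general lemma on dense subgroups, after which a short net argument gives (b). Your route can be repaired without (a) by proving directly that $F(Z)\cap\ker S$ is dense in $\ker S$: write $n\in\ker S$ as a word in letters $(t_i,y_i)^{\epsilon_i}$; the condition $S(n)=0$ says $\sum_{y_i=y}\epsilon_i t_i=0$ for each $y$, so approximating each $y_i$ by a net in $X$---using the \emph{same} net for equal $y_i$'s---produces words in $F(Z)\cap\ker S$ converging to $n$. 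But that is substantially more than what you wrote, and the paper's order gets the needed density for free.
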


\begin{proof}
(a) Consider the following commutative diagram, with $i$ and $j$  the natural continuous
bijective homomorphisms: 
\begin{align}
\bfig
\square|alra|/->`->`->`->/<1200,500>[{A([-1,1]\times X)}`V(X)`{A([-1,1]\times X,[-1,1]\times \beta X)}`V(X,\beta X);q`i`j`p]
\efig
\end{align}   
By Theorem~\ref{thm:main:VXY}$\empty^\prime$  applied to the pairs $(X,X)$ and $(X,\beta X)$, 
the maps
$q$ and $p$ are quotients. Since $X$ is pseudocompact, so is $[-1,1]\times X$
(\cite[3.10.27]{Engel}), and thus by Glicksberg's Theorem (\cite[Theorem~1]{Glick2}),
\begin{align}
\beta([-1,1]\times X) \cong [-1,1]\times \beta X.
\end{align}
It is well known that if $Z$ is pseudocompact, then the natural map
$A(Z)\rightarrow A(Z,\beta Z)$ is a topological isomorphism (\cite[2.6.1]{TkaTop2}).
Therefore, 
\begin{align}
    A([-1,1]\times X) 
    \to^i 
    A([-1,1]\times X,\beta([-1,1]\times X))
    \cong 
    A([-1,1]\times X,[-1,1]\times \beta X)  
\end{align}
is  a topological isomorphism. Hence, $j$ is a topological isomorphism too.

(b) By Theorem~\ref{thm:main:VXY}$\empty^\prime$, $V(X)$ is a quotient
of $A([-1,1]\times X)$, which in turn is a quotient of $F([-1,1]\times X)$.
Similarly, $V(\beta X)$ is a quotient of $F([-1,1]\times \beta X)$. Thus,
by part (a), one obtains the following commutative diagrams
with $\pi_i$ being quotients:
\begin{align}
    \bfig
    \square|alra|/->`^{(}->`^{(}->`->/<1200,500>[{F([-1,1]\times X)}`V(X)`{F([-1,1]\times \beta X)}`V(\beta X);\pi_1`dense`dense`\pi_2]
    \efig
\end{align}
Let $f\colon V(X) \rightarrow \mathbb{R}$ be a continuous map.
Then the composite $f\pi_1 \colon F([-1,1]\times X) \rightarrow \mathbb{R}$ is a continuous
map, and by Theorem~\ref{fact:Tkachenko:pseudocompact}, it extends
to a continuous map $f^\prime\colon F([-1,1]\times \beta X) \rightarrow \mathbb{R}$.
Since $\pi_2$ is a quotient map, it suffices to show that $f^\prime$ is
constant on the cosets of $\ker \pi_2$. To that end,
let $x,y \in F([-1,1]\times \beta X)$ be such that
$y=xz$, where $z \in \ker \pi_2$.
Since $F([-1,1]\times X)$ is a dense subgroup of $F([-1,1]\times \beta X)$ and, by part (a),
the natural continuous homomorphism $V(X)\rightarrow V(\beta X)$ is
an embedding, $\ker \pi_1 = F([-1,1]\times X) \cap \ker \pi_2$ is dense in $\ker \pi_2$ (\cite[1.17]{GLCLTG}). Thus, there is a net $\{x_\alpha\}$
in $F([-1,1]\times X)$ such that $x_\alpha \rightarrow x$ and
there is a net $\{z_\alpha\}$ in $\ker \pi_1$ such that 
$z_\alpha \rightarrow z$. Therefore,
\begin{align}
    f^\prime(y) & = \lim f^\prime (x_\alpha z_\alpha) \\
    & = \lim f(\pi_1(x_\alpha z_\alpha)) \\
    & = \lim f(\pi_1(x_\alpha)) \\
    & =\lim f^\prime(x_\alpha) = f^\prime(x).
\end{align}
Hence, $f^\prime$ factors through a continuous map 
$\widetilde f \colon V(\beta X) \rightarrow \mathbb{R}$
that extends $f$.
\end{proof}

Combining Theorems~\ref{fact:Tkachenko:cblycompact} and~\ref{thm:main:VXY} yields the following result.

\begin{Ltheorem*}[\ref{thm:main:cbly}]
Let $X$ be a sequentially compact Tychonoff space such that $X^n$ is normal for every $n \in \mathbb{N}$.

\begin{enumerate}
    \item 
    If $X$ is a sequential space, then so is $V(X)$.

    \item
    If $X^n$ is a $k$-space for every $n \in \mathbb{N}$, then $V(X)$ is a $k$-space.
\end{enumerate}
\end{Ltheorem*}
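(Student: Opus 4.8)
The plan is to reduce both parts to Tkachenko's theorem (Theorem~\ref{fact:Tkachenko:cblycompact}) applied to the space $Z := [-1,1]\times X$, and then transport the conclusion along the quotient homomorphism $F([-1,1]\times X) = F(Z) \twoheadrightarrow V(X)$ furnished by Theorem~\ref{thm:main:VXY}. Since both sequentiality and the property of being a $k$-space are preserved by quotient maps, it suffices to show that $F(Z)$ is sequential in case (a) and a $k$-space in case (b). For this I must check that $Z$ satisfies the hypotheses of Tkachenko's theorem: that $Z$ is Tychonoff and countably compact, that $Z^n$ is normal for every $n$, and --- depending on the case --- that $Z$ is sequential, respectively that each $Z^n$ is a $k$-space.

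The routine hypotheses come first. The space $Z$ is Tychonoff as a product of Tychonoff spaces. Because $[-1,1]$ and $X$ are sequentially compact and finite products of sequentially compact spaces are sequentially compact, each $Z^n \cong [-1,1]^n \times X^n$ is sequentially compact, hence countably compact. For case (a), I would show that $Z = X\times[-1,1]$ is sequential: writing $X$ as a quotient $q\colon D\to X$ of a metrizable space $D$ (possible because sequential spaces are exactly the quotients of metric spaces) and using that $[-1,1]$ is locally compact, Whitehead's theorem gives that $q\times\operatorname{id}_{[-1,1]}\colon D\times[-1,1]\to X\times[-1,1]$ is again a quotient map; since $D\times[-1,1]$ is metrizable, $Z$ is a quotient of a metric space and therefore sequential. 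For case (b), I would invoke that the product of a $k$-space with a locally compact Hausdorff space is a $k$-space: as $X^n$ is a $k$-space and $[-1,1]^n$ is compact, $Z^n\cong X^n\times[-1,1]^n$ is a $k$-space.

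The crux is the normality of the powers $Z^n \cong X^n \times [-1,1]^n$, since normality is notoriously unstable under products. Here I would argue by iterating Dowker's theorem one interval factor at a time. Set $W_0 := X^n$, which is normal by hypothesis and countably compact (being sequentially compact), hence countably paracompact. Inductively, if $W_j := X^n\times[-1,1]^j$ is normal and countably compact, then it is countably paracompact, so Dowker's theorem yields that $W_{j+1} = W_j\times[-1,1]$ is normal; moreover $W_{j+1}$ is again countably compact, being the product of a countably compact space with the compact metric space $[-1,1]$. After $n$ steps this gives that $Z^n \cong W_n = X^n\times[-1,1]^n$ is normal. It is precisely at the base case that sequential compactness of $X$, rather than mere countable compactness, is used: it guarantees that $X^n$ is countably compact, whereas countable compactness alone is not productive.

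With all the hypotheses verified, Tkachenko's theorem yields that $F(Z)$ is sequential in case (a) and a $k$-space in case (b). Since $V(X)$ is a quotient topological group of $F(Z) = F([-1,1]\times X)$ by Theorem~\ref{thm:main:VXY}, and both classes of spaces are closed under quotients, $V(X)$ is sequential, respectively a $k$-space, as required. The main obstacle, as indicated, is establishing the normality of $Z^n$; the remaining hypotheses follow by assembling standard preservation results for products.
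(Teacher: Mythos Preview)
Your proposal is correct and follows essentially the same route as the paper: verify that $Z=[-1,1]\times X$ satisfies the hypotheses of Tkachenko's theorem and then pass to $V(X)$ via the quotient from Theorem~\ref{thm:main:VXY}. The only cosmetic differences are that the paper handles normality of $Z^n$ in one step (normal $+$ countably paracompact times a compact metric space is normal, \cite[5.2.7]{Engel}) rather than iterating Dowker, and cites \cite[3.3.J]{Engel} directly for ``sequential $\times$ compact is sequential'' rather than unpacking it via Whitehead's theorem.
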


For sequential spaces, countable compactness and sequential compactness are equivalent  (\cite[3.10.31]{Engel}). 
Consequently, in part (a) of Theorem~\ref{thm:main:cbly}, the condition that $X$ be sequentially compact is not stricter than the conditions in Theorem~\ref{fact:Tkachenko:cblycompact}(a).

\begin{proof}[Proof of Theorem~\ref{thm:main:cbly}.] 
We show that $[-1,1]\times X$ satisfies the conditions of Theorem~\ref{fact:Tkachenko:cblycompact}.
Since $X$ is sequentially compact, so is $[-1,1]\times X$ (\cite[3.10.35]{Engel}). 
In particular, $[-1,1]\times X$ is countably compact.
Let $n \in \mathbb{N}$. Since $X$ is sequentially compact, so is $X^n$ for every $n \in \mathbb{N}$ (\cite[3.10.35]{Engel}). 
Thus, $X^n$ is countably compact, and in particular, it is countably paracompact. 
This implies that
\begin{align}
([-1,1]\times X)^n \cong [-1,1]^n \times X^n
\end{align}
is normal, because $X^n$ is normal (\cite[5.2.7]{Engel}).

(a) The space $[-1,1] \times X$ is sequential, because $X$ is so and $[-1,1]$ is compact (\cite[3.3.J]{Engel}). 
Thus, by Theorem~\ref{fact:Tkachenko:cblycompact}(a), $F([-1,1]\times X)$ is sequential.
Therefore,  by Theorem~\ref{thm:main:VXY}, $V(X)$ is sequential, being a quotient
of $A([-1,1]\times X)$ and consequently a quotient of $F([-1,1]\times X)$.

(b) For every $n \in \mathbb{N}$, the space $([-1,1]\times X)^n \cong [-1,1]^n \times X^n$
is a $k$-space, because $X^n$ is a $k$-space and $[-1,1]^n$ is compact (\cite[3.3.27]{Engel}). 
Thus, by Theorem~\ref{fact:Tkachenko:cblycompact}(b), $F([-1,1]\times X)$ is a $k$-space.
Therefore,  by Theorem~\ref{thm:main:VXY}, $V(X)$ is a $k$-space, being a quotient
of $A([-1,1]\times X)$ and consequently a quotient of $F([-1,1]\times X)$.
\end{proof}

Recall that space $X$ is {\itshape submetrizable} if it admits a continuous injective map into
a metrizable space. For submetrizable spaces, $V(X)$ is sequential if and only if it is a $k$-space
(\cite[3.7]{LinLinLiu}). This, however, does not render Theorem~\ref{thm:main:cbly} redundant.

\begin{example}
There are spaces $X$ that satisfy the conditions of part (b) of Theorem~\ref{thm:main:cbly}
such that $V(X)$ is not sequential. The space $X=[0,\omega_1]$ with the order topology
is compact and sequentially compact, but
is not sequential (because $[0,\omega_1)$ is sequentially closed in $X$). Thus, by
Theorem~\ref{thm:main:cbly}(b), $V([0,\omega_1])$ is a $k$-space, but it is not sequential,
because its closed subspace $\iota_{[0,\omega_1]}([0,\omega_1])$ is not sequential (Lemma~\ref{lemma:VXTych}).
\end{example}

\section{Applications}

\label{sect:applications}

In this section, we present the proof of Theorem~\ref{thm:main:ACP} and two applications.

\begin{Ltheorem*}[\ref{thm:main:ACP}]
Let $X$ be a countably compact sequential Tychonoff  space such that $X^n$ is normal for every
$n \in \mathbb{N}$. Let $\{X_\alpha\}_{\alpha \in \mathbb{I}}$ be a long family of subspaces
of $X$  such that  $X = \bigcup\limits_{\alpha \in\mathbb{I}} X_\alpha$. Then:

\begin{enumerate}
    \item 
    $\{V(X_\alpha,X)\}_{\alpha \in \mathbb{I}}$ satisfies ACP and 
    $\colim\limits_{\alpha \in \mathbb{I}} V(X_\alpha,X) = V(X)$; and
    
    \item
    if each $X_\alpha$ is compact, then $\colim\limits_{\alpha \in \mathbb{I}} V(X_\alpha) = V(X)$.
\end{enumerate}

\end{Ltheorem*}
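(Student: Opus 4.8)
The plan is to pin down three topologies on the common underlying vector space $V(X)=\bigcup_{\alpha\in\mathbb I}V(X_\alpha,X)$ and show they coincide. Write $\mathscr V$ for the intrinsic topology of $V(X)$, $\mathscr T$ for the colimit \emph{space} topology, and $\mathscr A$ for the colimit \emph{group} (equivalently, vector-space) topology on the union. First I would record two reductions. Algebraically one has $V(X)=\bigcup_{\alpha}V(X_\alpha,X)$: any $v\in V(X)$ is a finite combination $\sum_i r_i\iota_X(x_i)$, each $x_i$ lies in some $X_{\gamma_i}$, and by directedness these finitely many indices have an upper bound, so $v\in V(X_\beta,X)$. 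Topologically, each $V(X_\alpha,X)$ carries the subspace topology from $V(X)$ by definition, so the bonding maps are embeddings and $\mathscr V$ makes every inclusion continuous. Finally, since $X$ is sequential and countably compact it is sequentially compact, so Theorem~\ref{thm:main:cbly}(a) applies and $(V(X),\mathscr V)$ is sequential; this is the engine of the argument.

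The heart of part (a) is to combine the automatic sandwich $\mathscr V\subseteq\mathscr A\subseteq\mathscr T$ with the reverse inclusion $\mathscr T\subseteq\mathscr V$. The sandwich is formal: $\mathscr V$ is a vector-space (hence group) topology making every inclusion continuous, so $\mathscr V\subseteq\mathscr A$, while any group topology making the inclusions continuous is coarser than the colimit space topology, so $\mathscr A\subseteq\mathscr T$. For the reverse inclusion it suffices, since $\mathscr V$ is sequential, to show that every $\mathscr T$-closed set $U$ is sequentially $\mathscr V$-closed. So let $v_n\in U$ with $v_n\to v$ in $\mathscr V$. Here is where \emph{longness} is essential: each $v_n$ lies in some $V(X_{\gamma_n},X)$ and the limit $v$ in some $V(X_{\gamma_0},X)$; the countable family $\{\gamma_n\}\cup\{\gamma_0\}$ has an upper bound $\beta$, so the entire sequence together with its limit lies in the single level $V(X_\beta,X)$. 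As this level carries the subspace topology, $v_n\to v$ already in $V(X_\beta,X)$, and $U\cap V(X_\beta,X)$ is closed there because $U$ is $\mathscr T$-closed; hence $v\in U$. This yields $\mathscr T=\mathscr V=\mathscr A$, which is precisely ACP together with $\colim_{\alpha}V(X_\alpha,X)=V(X)$.

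For part (b) I would reduce to (a) by showing that, when $X_\alpha$ is compact, the canonical continuous bijection $V(X_\alpha)\to V(X_\alpha,X)$ is a topological isomorphism; granting this, $\colim_\alpha V(X_\alpha)=\colim_\alpha V(X_\alpha,X)=V(X)$. To establish the isomorphism I would run the argument through the quotient presentation of Theorem~\ref{thm:main:VXY}$^{\prime}$: the space $[-1,1]\times X_\alpha$ is a compact subspace of $[-1,1]\times X$, and for a compact subspace $K$ of a Tychonoff space the natural map $A(K)\to A(K,\,\cdot\,)$ into the ambient free abelian group is a topological isomorphism (compactness forces $C$-embedding, using that $X$ is normal). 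Comparing the two quotient maps $q\colon A([-1,1]\times X_\alpha)\to V(X_\alpha)$ and $p\colon A([-1,1]\times X_\alpha,[-1,1]\times X)\to V(X_\alpha,X)$ then transports the isomorphism downstairs, exactly as in the proof of Theorem~\ref{thm:main:pseudocompact}$^{\prime}$(a).

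I expect the main obstacle to be the reverse inclusion $\mathscr T\subseteq\mathscr V$ in part (a) --- specifically, justifying that a single level $V(X_\beta,X)$ absorbs an entire convergent sequence and its limit. This is exactly the point at which the hypotheses conspire: sequentiality of $V(X)$ (from Theorem~\ref{thm:main:cbly}, valid because sequential plus countably compact gives sequentially compact and $X^n$ is normal) reduces closedness to a statement about sequences, while longness upgrades the directedness needed for finite sets to the countable capture needed for a whole sequence. A secondary point to handle with care is the identification $V(X_\alpha)\cong V(X_\alpha,X)$ in part (b), for which the quotient machinery of Theorem~\ref{thm:main:VXY}$^{\prime}$ and the compactness of $[-1,1]\times X_\alpha$ are the right tools.
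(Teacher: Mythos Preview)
Your argument for part (a) is correct and is essentially the same as the paper's: the paper observes that $V(X)$ is sequential (hence countably tight) by Theorem~\ref{thm:main:cbly}(a) and then invokes \cite[2.3]{RDGL1}, whose proof is precisely the ``long family absorbs a convergent sequence into one level'' argument you spell out. So for (a) you have reproduced inline what the paper obtains by citation.

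There is, however, a genuine gap in your approach to part (b). You compare the two maps
\[
q\colon A([-1,1]\times X_\alpha)\longrightarrow V(X_\alpha)
\quad\text{and}\quad
p\colon A([-1,1]\times X_\alpha,[-1,1]\times X)\longrightarrow V(X_\alpha,X),
\]
calling both ``quotient maps'' and invoking the pattern of Theorem~\ref{thm:main:pseudocompact}$^{\prime}$(a). But Theorem~\ref{thm:main:VXY}$^{\prime}$ only yields that $p$ is a quotient when the smaller space is \emph{dense} in the larger one; that hypothesis is exactly what drives the proof (the extension of $\pi$ to $\widetilde\pi$, the coherence of the $g_n$, etc.). Here $X_\alpha$ is compact, hence closed in $X$, and there is no reason for it to be dense. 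Knowing only that $q$ is a quotient and that $i\colon A([-1,1]\times X_\alpha)\to A([-1,1]\times X_\alpha,[-1,1]\times X)$ is an isomorphism gives you a quotient $q\circ i^{-1}$ onto $V(X_\alpha)$, but the factorization $p=j\circ(q\circ i^{-1})$ then tells you nothing new about $j$ beyond continuity. Nor is $A([-1,1]\times X_\alpha,[-1,1]\times X)$ a saturated subset for the big quotient $A([-1,1]\times X)\to V(X)$ (for instance, the generator $(0,x)$ with $x\notin X_\alpha$ lies in the kernel but not in this subgroup), so you cannot get $p$ as a restricted quotient either.

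The paper sidesteps this entirely: for (b) it simply cites the known fact that for compact $X_\alpha$ the natural map $V(X_\alpha)\to V(X_\alpha,X)$ is a topological isomorphism (\cite[3.12]{GabMor}), and then part (a) finishes the job. If you want a self-contained route, you would need an independent proof of that isomorphism (e.g., via the description of the free topological vector space on a compact space and $P$-embedding of compacta in Tychonoff spaces), not the quotient machinery of Theorem~\ref{thm:main:VXY}$^{\prime}$.
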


\begin{proof}
(a) As noted earlier, countable compactness and sequential compactness 
are equivalent for sequential spaces (\cite[3.10.31]{Engel}). 
Thus, by Theorem~\ref{thm:main:cbly}(a), $V(X)$ is sequential, and in particular,
it is countably tight (\cite[1.7.13(c)]{Engel}). By a special case of 
\cite[2.3]{RDGL1}, the statement follows.

(b) Since $X_\alpha$ is compact, the natural continuous homomorphism 
$V(X_\alpha) \rightarrow  V(X_\alpha,X)$ is a topological isomorphism (\cite[3.12]{GabMor}). 
Hence, the statement follows by (a).
\end{proof}

Since $\omega_1$  is countably compact, sequential, and Tychonoff, and $\omega_1^n$ is normal
for every $n \in \mathbb{N}$ (\cite[Corollary]{Conover}), Theorem~\ref{thm:main:ACP} has the following immediate application.

\begin{corollary} \label{apps:cor:omega}
$V(\omega_1) = \colim\limits_{\alpha < \omega_1} V([0,\alpha])$. \qed
\end{corollary}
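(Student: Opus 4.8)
The plan is to read off the identity directly from Theorem~\ref{thm:main:ACP}(b), applied to the space $X=\omega_1$ (the set of countable ordinals with the order topology) and the family $X_\alpha:=[0,\alpha]$ indexed by $\alpha<\omega_1$. The whole task thus reduces to checking that $X$ and this specific family meet the hypotheses of the theorem; once they do, part~(b) delivers $\colim_{\alpha<\omega_1} V([0,\alpha])=V(\omega_1)$ with nothing further to prove.

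For the hypotheses on $X$ I would simply invoke the properties recorded in the sentence preceding the statement: $\omega_1$ is Tychonoff and sequential (being first countable, since every point is a countable ordinal and hence has countable cofinality), it is countably compact because every countable subset of $\omega_1$ is bounded and therefore clusters at its supremum, and $\omega_1^n$ is normal for each $n$ by \cite[Corollary]{Conover}. No new argument is needed here beyond citing these facts.

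The only point that genuinely requires verification concerns the family $\{[0,\alpha]\}_{\alpha<\omega_1}$. It covers $X$, since each $\beta<\omega_1$ lies in $[0,\beta]$, and it forms a chain under inclusion, hence is directed as the theorem requires. Each $[0,\alpha]$ is compact: as a subset of $\omega_1$ it is the successor ordinal $\alpha+1$, which is compact in the order topology. For longness, given a countable index set $J\subseteq\omega_1$, the uncountable cofinality (regularity) of $\omega_1$ forces $\gamma:=\sup J<\omega_1$, and then $[0,\alpha]\subseteq[0,\gamma]$ for every $\alpha\in J$, so $\gamma$ is an upper bound lying in the index set.

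With every hypothesis verified, Theorem~\ref{thm:main:ACP}(b) applies and yields the asserted equality. I do not foresee a real obstacle: the substance of the corollary is entirely carried by the theorem, and the single nontrivial observation is that countable subsets of $\omega_1$ are bounded, which is precisely the regularity of $\omega_1$ and is what makes both the countable compactness of $X$ and the longness of $\{[0,\alpha]\}$ hold.
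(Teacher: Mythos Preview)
Your proposal is correct and follows exactly the paper's approach: the corollary is obtained as an immediate application of Theorem~\ref{thm:main:ACP}(b) after noting that $\omega_1$ is countably compact, sequential, Tychonoff, that $\omega_1^n$ is normal for all $n$ by \cite[Corollary]{Conover}, and that the compact intervals $[0,\alpha]$ form a long covering family. Your write-up is in fact more detailed than the paper's, which dispatches the result with a single sentence and a \qed.
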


The space $\omega_1$ is not the only space known to satisfy the conditions of Theorem~\ref{thm:main:ACP}.
The Closed Long Ray, $\mathbb{L}_{\geq 0}$, is also countably compact, sequential, and Tychonoff,
and its compact subspaces $X_\alpha=[(0,0),(\alpha,0)]$ also form a long family. It is not immediate to see
that $\mathbb{L}_{\geq 0}^n$ is normal for every $n\in \mathbb{N}$; however, it
follows from the result of Conover (\cite[Theorem~2]{Conover}).  Therefore, we obtain a second
application of Theorem~\ref{thm:main:ACP}.

\begin{corollary} \label{apps:cor:LongRay}
$V(\mathbb{L}_{\geq 0}) = \colim\limits_{\alpha < \omega_1} V([(0,0),(\alpha,0)])$. \qed
\end{corollary}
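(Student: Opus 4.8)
The plan is to obtain the corollary as a direct instance of Theorem~\ref{thm:main:ACP}(b), taking $X = \mathbb{L}_{\geq 0}$ and the family $X_\alpha = [(0,0),(\alpha,0)]$ indexed by $\alpha < \omega_1$. The whole argument therefore reduces to verifying that this $X$ and $\{X_\alpha\}_{\alpha<\omega_1}$ satisfy the hypotheses of Theorem~\ref{thm:main:ACP}: that $\mathbb{L}_{\geq 0}$ is a countably compact sequential Tychonoff space with $\mathbb{L}_{\geq 0}^n$ normal for every $n\in\mathbb{N}$, that each $X_\alpha$ is compact, and that $\{X_\alpha\}_{\alpha<\omega_1}$ is a long family whose union is $\mathbb{L}_{\geq 0}$. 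Once these are checked, part (b) of the theorem gives $\colim\limits_{\alpha<\omega_1} V(X_\alpha) = V(\mathbb{L}_{\geq 0})$, which is exactly the assertion.

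First I would dispose of the routine hypotheses. Being a linearly ordered topological space, $\mathbb{L}_{\geq 0}$ is Tychonoff. It is first countable, hence sequential: the only delicate points are the $(\alpha,0)$ with $\alpha$ a limit ordinal, and there a countable cofinal sequence in $\alpha$ (which exists because $\alpha$ is a countable ordinal) yields a countable neighborhood base. For the covering and directedness properties, note that $(\beta,t)\in X_{\beta+1}$, so the $X_\alpha$ cover $\mathbb{L}_{\geq 0}$, and that $\{X_\alpha\}_{\alpha<\omega_1}$ is long because $\omega_1$ is regular: any countable set of indices has supremum below $\omega_1$, which serves as an upper bound. Each $X_\alpha$ is a closed bounded order-interval of the order-complete space $\mathbb{L}_{\geq 0}$ and is therefore compact; the same observation gives countable compactness of $\mathbb{L}_{\geq 0}$, since any sequence meets only countably many floors $\{\beta\}\times[0,1)$ and so lies in some compact $X_\alpha$, where it must cluster.

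The single substantial point, and the step I expect to be the main obstacle, is the normality of every finite power $\mathbb{L}_{\geq 0}^n$. Normality is not preserved by products in general, so this cannot be reached by an elementary induction on $n$; instead I would cite Conover's theorem (\cite[Theorem~2]{Conover}), which asserts precisely the normality of the finite powers of the closed long ray, to supply this hypothesis. With all the conditions of Theorem~\ref{thm:main:ACP} thus in place, a single application of its part (b) completes the proof.
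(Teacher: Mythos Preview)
Your proposal is correct and follows exactly the paper's approach: verify that $\mathbb{L}_{\geq 0}$ and the family $\{[(0,0),(\alpha,0)]\}_{\alpha<\omega_1}$ satisfy the hypotheses of Theorem~\ref{thm:main:ACP}(b), invoking Conover's result for the normality of the finite powers, and then apply the theorem. The only minor imprecision is that Conover's Theorem~2 is a general statement about products of ordered spaces rather than one tailored to the long ray; normality of $\mathbb{L}_{\geq 0}^n$ is a consequence of it, not its content.
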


\section*{Acknowledgments}

We would like to express our heartfelt gratitude to Karl H. Hofmann for introducing us to each other, and to the organizers of the 2015 Summer Conference on Topology and its Applications for providing an environment conducive for this collaboration to form. We wish to thank David Gauld for the valuable correspondence. We are grateful to Karen Kipper for her kind help in proofreading this paper for grammar and punctuation. We are grateful to the anonymous referee for their detailed and helpful suggestions that have contributed to the articulation and clarity of the manuscript.

{\footnotesize

\bibliography{dahmen-lukacs}

}

\begin{samepage}

\bigskip
\noindent
\begin{tabular}{l @{\hspace{1.6cm}} l}
Rafael Dahmen						    & G\'abor Luk\'acs \\
Department of Mathematics				& Department of Mathematics and Statistics\\
Karlsruhe Institute of Technology		& Dalhousie University\\
D-76128 Karlsruhe					    & Halifax, B3H 3J5, Nova Scotia\\
Germany                			        & Canada\\
{\itshape rafael.dahmen@kit.edu}        & {\itshape lukacs@topgroups.ca}
\end{tabular}

\end{samepage}

\end{document}